\theoremstyle{plain}
\newtheorem{theorem}{Theorem}[section]
\newtheorem{lemma}[theorem]{Lemma}
\newtheorem{corollary}[theorem]{Corollary}
\theoremstyle{definition}
\theoremstyle{remark}
\newtheorem{remark}{Remark}[section]
\DeclareMathOperator{\tr}{tr}%
\newcommand{\Z}{\mathbf Z}
\newcommand{\R}{\mathbf R}
\newcommand{\T}{\mathbf T}
\newcommand{\e}{\varepsilon}
\renewcommand{\d}{\mathrm d}
\newcommand{\skp}[2]{\left<#1,#2\right>}
\newcommand{\C}{\mathbf C}
\newcommand{\Q}{\mathbb Q}
\renewcommand{\a}{\alpha}
\newcommand{\ovl}{\overline}
\newcommand{\Sc}{\mathcal C}
\newcommand{\Gl}{\mathrm{Gl}}
\newcommand{\re}{\mathrm{Re}}
\numberwithin{equation}{section} 
\begin{document}
\title[Clarkson-McCarthy inequality on a locally compact group]{Clarkson-McCarthy inequality on a locally compact group} 

\author{Dragoljub J. Ke\v cki\' c, Zlatko Lazovi\' c}
\address{University of Belgrade\\ Faculty of Mathematics\\ Student\/ski trg 16-18\\ 11000 Beograd\\ Serbia}

\email{keckic@matf.bg.ac.rs, zlatko.lazovic@matf.bg.ac.rs}

\thanks{This work is partially supported by the Ministry of Science, Technological Development and Innovation of Republic of Serbia: grant number 451-03-47/2023-1/200104 with Faculty of Mathematics.}

\begin{abstract}
Let $G$ be a locally compact group, $\mu$ its Haar measure, $\hat G$ its Pontryagin dual and $\nu$ the dual measure. For any $A_\theta\in L^1(G;\Sc_p)\cap L^2(G;\Sc_p)$, ($\Sc_p$ is Schatten ideal), and $1<p\le2$ we
prove
$$\int_{\hat G}\left\|\int_GA_\theta\ovl{\xi(\theta)}\d\mu(\theta)\right\|_p^q\d\nu(\xi)\le
    \left(\int_G\|A_\theta\|_p^p\d\mu(\theta)\right)^{q/p},
$$
where $q=p/(p-1)$. This appears to be a generalization of some earlier obtained inequalities, including Clarkson-McCarthy inequalities (in the case $G=\Z_2$), and Hausdorff-Young inequality. Some corollaries are also given.
\end{abstract}


\subjclass{47A30, 47B10, 43A25}

\keywords{Clarkson inequalities, Hausdorff-Young inequality, unitarily invariant norm, abstract Fourier transform}

\maketitle

\section{Introduction}

Investigating uniformly convex spaces, Clarkson \cite{Clarkson} proved the following inequalities for $L^p$ norms:
\begin{equation}\label{Lp>2}
\begin{gathered}
2(\|f\|_p^p+\|g\|_p^p)\le\|f+g\|_p^p+\|f-g\|_p^p\le2^{p-1}(\|f\|_p^p+\|g\|_p^p),\qquad p\ge2\\
2(\|f\|_p^p+\|g\|_p^p)\ge\|f+g\|_p^p+\|f-g\|_p^p\ge2^{p-1}(\|f\|_p^p+\|g\|_p^p),\qquad p\le2
\end{gathered}
\end{equation}
\begin{equation}\label{Lp<2alt}
\|f+g\|_p^p+\|f-g\|_p^p\le2(\|f\|_p^q+\|g\|_p^q)^{p/q},\qquad p\ge 2,\; q=p/(p-1).
\end{equation}
\begin{equation}\label{Lp<2}
\|f+g\|_p^q+\|f-g\|_p^q\le2^{q-1}(\|f\|_p^p+\|g\|_p^p)^{q/p},\qquad p\le 2,\; q=p/(p-1).
\end{equation}

Later McCarthy \cite{McCarthy} generalized these inequalities to Schatten classes of operators. He replaced measurable functions $f$ and $g$ by compact operators $A$ and $B$, and $L^p$ norm by $C_p$ norm defined as
$$\|A\|_p=\left(\tr(|A|^p)\right)^{1/p}.$$
The inequalities he obtained was exactly the inequalities (\ref{Lp>2}), (\ref{Lp<2alt}) and (\ref{Lp<2}). In operator framework, these inequalities are usually referred as Clarkson-McCarthy inequalities.

Bhatia and Kittaneh \cite{BhatiaFudo}, proved the following generalization for $n$-tuple of operators. Among others, for $1<p\le 2$, they proved in Theorem 1.4:
\begin{equation}\label{Cpn}
\sum_{k=0}^{n-1}\Big\|\sum_{j=0}^{n-1}\omega_j^kA_j\Big\|_p^q\le n\left(\sum_{j=0}^{n-1}\|A_j\|_p^p\right)^{\frac qp},
\end{equation}
where $\omega_j=e^{2\pi i j/n}$ is the $j$-th degree of the $n$-th root of unity.

The first author in \cite{Keckic2019} generalized these inequalities to compact topological groups. Among others, he proved in Theorem 3.6:
\begin{equation}\label{Inequalityp<2Compact}
\sum_{k\in\hat G}\left\|\int_G\ovl{k(\theta)}A_\theta\d\mu(\theta)\right\|_p^q\le
    \left(\int_G\|A_\theta\|_p^p\right)^{q/p},
\end{equation}
for compact abelian group $G$ and the corresponding Haar measure $\mu$. This inequality is a generalization of both(\ref{Lp<2}) if $G=\Z_2$ and (\ref{Cpn}) if $G=\Z_n$.

In the same paper in Problem 5.4, it is asked whether it is possible to generalize (\ref{Inequalityp<2Compact}) to locally compact groups. It is also explained why other Clarkson-McCarthy inequalities (e.g.\ (\ref{Lp>2}) and (\ref{Lp<2alt})) can't be generalized to locally compact groups in a straightforward way.

The aim of this paper is to generalize (\ref{Inequalityp<2Compact}) to locally compact groups. The main result is contained in Theorem \ref{TheoremPQ}.

The paper is organized as follows: in section 2, we give a brief recapitulation of topics, necessary for further reading. Section 3 is devoted to main results. Finally, in section 4, we derive applications of the main results.

\section{Prerequisits}

\subsection{Schatten ideals} Let $H$ be a separable Hilbert space, and let $A:H\to H$ be a compact operator. Its singular values $s_n(A)$ are defined as the eigenvalues of its absolute value $(A^*A)^{1/2}$, i.e.~$s_n(A)=\lambda_n(A^*A)^{1/2}$.

The Schatten ideala $\Sc_p$ ($1\le p<\infty$) are defined as
$$\Sc_p=\{A:H\to H\mid A\mbox{ compact},\,\|A\|_p=\tr(|A|^p)^{1/p}=\left(\sum_{n=1}^\infty s_n^p(A)\right)^{1/p}<\infty\}.$$
For $p=\infty$, $\Sc_\infty$ is the set of all compact operators, with the usual norm inherited from $B(H)$.

It is well known that all $\Sc_p$ are Banach spaces, ideals in $B(H)$, aa well as the following duality results
$$(\Sc_p)^*\cong\Sc_q,\quad 1<p<\infty\quad\frac1p+\frac1q=1,\qquad(\Sc_1)^*\cong B(H),\qquad (\Sc_\infty)^*\cong B(H).$$

For more details, see \cite[Chapter III]{GKrein}.

\subsection{Abstract harmonic analysis} Let $(G,+)$ be an abelian group with locally compact Hausdorff topology such that the operations in $G$ are continuous with respect to it. As usual, we shall use the name \textit{locally compact group}.

\textit{Haar measure} on a locally compact group is a regular, Borel and invariant measure $\mu$ on $G$. (The latter means $\mu(E)=\mu(x+E)$.) Haar measure is unique up to a positive multiplicative scalar. If $G$ is compact it is usual to normalize the Haar measure by $\mu(G)=1$. However, if $G$ is only locally compact then, in general, there is no natural normalization.

A \textit{character} on $G$ is a homomorphism from $(G,+)$ to $(\T,\cdot)$ where $\T=\{z\in\C\mid|z|=1\}$, i.e.\ a mapping $\xi:G\to\T$ such that $\xi(x+y)=\xi(x)\xi(y)$.

\textit{Fourier transform} or \textit{abstract Fourier transform} of $f\in L^1(G)$ is
$$\hat f(\xi)=\int_G f(t)\overline{\xi(t)}\d\mu(t),$$
where $\xi\in\hat G$ and $\mu$ is the Haar measure on $G$.

The \textit{Pontryagin dual} of $G$ denoted by $\hat G$ is the set of all characters. It becomes a locally compact group if it is endowed by \textit{compact-open} topology. Therefore, $\hat G$ has, also, the Haar measure, denote by $\nu$ or $\hat\mu$. It is normalized such that the following \textit{inversion formula} holds:
$$f(t)=\int_{\hat G}\hat f(\xi)\xi(t)\d\nu(\xi),\qquad\mbox{for }f\in L^1(G)\mbox{ and }\hat f\in L^1(\hat G).$$

Many statements, such as Plancharel theorem, can be generalized to the abstract harmonic analysis. For more details see \cite{Foland}.

\subsection{Bochner spaces}

Let $(\Omega,\mu)$ be a measurable space and let $X$ be a Banach space. The \textit{Bochner space} $L^p(\Omega;X)$ for $1\leq p<\infty$ is defined as the set of strongly measurable functions $f:\Omega\to X$ such that
$$\|f\|_{L^p(\Omega;X)}:=\left(\int_\Omega\|f(t)\|_X^p\d\mu(t)\right)^{1/p}<+\infty,$$
after identification of $\mu$-almost everywhere equal functions. Here, strong measurability is equivalent to weak measurability (that is, the measurability of scalar functions $t\mapsto\Lambda(f(t))$ for all $\Lambda\in X^*$) and separability of the image of $f$.

In a similar way $L^{\infty}(\Omega;X)$ can be defined, with
$$\|f\|_{L^{\infty}(\Omega;X)}= \inf \{r\mid\mu\{\|f\|_X>r\}=0\}.$$

Bochner integral is linear, additive with respect to disjoint union, and also there holds
\begin{equation}\label{BochnerLinear}
T\int_\Omega f(t)\d\mu(t)=\int_\Omega Tf(t)\d\mu(t)
\end{equation}
for all $f\in L^p(\Omega;X)$ and all bounded linear $T:X\to Y$, provided that the left hand side exists.

In particular, for a Hilbert space $H$, the mapping $B(H)\ni A\mapsto\skp{Ag}h\in\C$, for fixed $g$, $h\in H$ is linear and bounded, and from (\ref{BochnerLinear}) we obtain
\begin{equation}\label{BochnerInner}
\skp{\left(\int_\Omega A_t\d\mu(t)\right)g}{h}=\int_\Omega\skp{A_tg}{h}\d\mu,
\end{equation}
for any $A_t\in L^1(\Omega;B(H))$ and all $g$, $h\in H$.

\begin{remark}
    Note a slightly more general result in \cite[Lemma 1.2]{Krtinic} that assumes only weak-* measurability of $A_t$.
\end{remark}

For more details, the reader is referred to \cite[Chapters 1 and 2]{Banah}.

\subsection{Complex interpolation}

Let us briefly explain basic concepts of complex interpolation. For more detailed approach the reader is referred to \cite{Bergh} (see also \cite[Appendix C]{Banah}).

Given two Banach spaces, say $X$ and $Y$, both of them continuously embedded in some Hausdorff topological vector space $V$, consider the spaces $X\cap Y$ and $X+Y$ with norms given by
$$\|a\|_{X\cap Y}=\max\{\|a\|_X,\|a\|_Y\},\qquad \|a\|_{X+Y}=\inf_{\substack{a=a_1+a_2\\a_1\in X,a_2\in Y}}(\|a_1\|_X+\|a_2\|_Y).$$
Such a pair $(X,Y)$ is called \textit{a compatible pair}.

Consider the set $\mathcal F$ consisting of all functions $f:\overline S\to X+Y$, analytic and bounded in the strip $S=\{z\in\C\mid 0<\re z<1\}$ and continuous on the closure $\overline S$ such that:
$$f(it)\in X,\quad f(1+it)\in Y,\qquad\mbox{for all }t\in\R.$$
The set $\mathcal F$ is, obviously, a linear space and moreover a Banach space if the norm is given by
$$\|f\|=\max\{\sup_{t\in\R}\|f(it)\|_X,\sup_{t\in\R}\|f(1+it)\|_Y\}.$$

Now, the interpolation space $[X,Y]_\theta$, $\theta\in[0,1]$ is defined by
$$[X,Y]_\theta=\{a\in X+Y\mid\mbox{there is an }f\in\mathcal F,f(\theta)=a\},$$
and the norm that makes it a Banach space, is given by
$$\|a\|_\theta=\inf_{f\in\mathcal F,f(\theta)=a}\|f\|_{\mathcal F}.$$

The space $[X,Y]_\theta$ has the property of being \textit{exact interpolation space}. This is described in the following theorem.

\begin{theorem}\label{ExactInterpolation}
    Let $(X_1,Y_1)$ and $(X_2,Y_2)$ be two compatible pairs, and let $T:X_1+Y_1\to X_2+Y_2$ be a linear mapping, bounded $X_1$ to $X_2$, and from $Y_1$ to $Y_2$. Then $T$ is a bounded operator from $[X_1,Y_1]_\theta$ to $[X_2,Y_2]_\theta$ and the following estimate holds:
    $$\|T\|_{B([X_1,Y_1]_\theta,[X_2,Y_2]_\theta}\le\|T\|_{B(X_1,X_2)}^{1-\theta}\|T\|_{B(Y_1,Y_2)}^\theta.$$
\end{theorem}

\begin{proof}See \cite[Theorem 4.1.2]{Bergh} or \cite[Theorem C.2.6]{Banah}.\end{proof}

Besides this general theorem dealing with complex interpolation, we need two results concerning concrete examples of Banach space.

Firstly, it is well known that for classic Lebesgue spaces we have
$$(L^p(\Omega,\mu),L^q(\Omega,\mu))_\theta=L^r(\Omega,\mu),\qquad 1/r=(1-\theta)/p+\theta/q.$$
However, there is a more general result:

\begin{theorem} \label{T1}\cite[Theorem 2.2.6]{Banah}
    Let $1\leq p_1\leq p_2<\infty$ or  $1\leq p_1<=\infty$ and let $0<\theta<1$. For any interpolation couple $(X_1,X_2)$ of complex Banach spaces and any measure space $(\Omega,\mu)$ we have $$[L^{p_1}(\Omega;X_1),L^{p_2}(\Omega;X_2]_{\theta}=L^{p_{\theta}}(\Omega;[X_1,X_2]_{\theta})$$ isometrically, with $\frac{1}{p_{\theta}}=\frac{1-\theta}{p_1}+\frac{\theta}{p_2}$.
\end{theorem}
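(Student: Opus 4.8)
The plan is to prove the asserted isometric identity by establishing the two contractive inclusions
$L^{p_\theta}(\Omega;E)\hookrightarrow[L^{p_1}(\Omega;X_1),L^{p_2}(\Omega;X_2)]_\theta$ and
$[L^{p_1}(\Omega;X_1),L^{p_2}(\Omega;X_2)]_\theta\hookrightarrow L^{p_\theta}(\Omega;E)$, where I abbreviate $E=[X_1,X_2]_\theta$ and $p=p_\theta$; each inclusion map has norm at most $1$, and together they force equality of both the spaces and the norms. The two recurring tools are the affine function $\alpha(z)=(1-z)/p_1+z/p_2$, chosen so that $\re\alpha(it)=1/p_1$, $\re\alpha(1+it)=1/p_2$ and $\alpha(\theta)=1/p$, and the Hadamard three-lines lemma. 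I treat the case $p_1,p_2<\infty$ first, the endpoint $p_2=\infty$ requiring the customary separate handling.

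For the first (easier) inclusion it suffices, by density of simple functions in $L^p(\Omega;E)$, to bound $\|f\|_\theta$ for $f=\sum_k a_k\chi_{A_k}$ with $a_k\in E$ and the $A_k$ pairwise disjoint of finite measure, normalized so that $\|f\|_{L^p(\Omega;E)}=1$. Using the definition of the interpolation norm on $E$, I pick for each $k$ an admissible $g_k\in\mathcal F$ with $g_k(\theta)=a_k$ whose boundary suprema lie within $\e$ of $\|a_k\|_E$; multiplying by $e^{\delta(z^2-\theta^2)}$ secures decay as $|\mathrm{Im}\,z|\to\infty$. I then glue these pieces with the scalar weights $\lambda_k(z)=\|a_k\|_E^{\,p\alpha(z)}$ by setting $F(z)=\sum_k\lambda_k(z)\,g_k(z)\|a_k\|_E^{-1}\chi_{A_k}$. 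The choice of $\alpha$ gives $F(\theta)=f$, while the disjointness of the $A_k$ yields on the boundary lines $\|F(it)\|_{L^{p_1}(\Omega;X_1)}^{p_1}\le(1+O(\e))\sum_k\mu(A_k)\|a_k\|_E^{p}$ and the analogous bound with exponent $p_2$ on $\re z=1$; both sums equal $\|f\|_{L^p(\Omega;E)}^p=1$, so $\|F\|_{\mathcal F}\le1+O(\e)$ and hence $\|f\|_\theta\le\|f\|_{L^p(\Omega;E)}$.

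For the second (hard) inclusion I argue by duality. I compute $\|f\|_{L^p(\Omega;E)}$ as the supremum of the pairings $\int_\Omega\skp{f(\omega)}{h(\omega)}\d\mu(\omega)$ over \emph{simple} $E^*$-valued functions $h=\sum_j b_j\chi_{B_j}$ with $\|h\|_{L^{p'}(\Omega;E^*)}\le1$, which reduces matters to a scalar estimate. Given an admissible $F\in\mathcal F$ with $F(\theta)=f$, I attach to $h$ a dual admissible family $H(z)=\sum_j\mu_j(z)\,h_j(z)\|b_j\|_{E^*}^{-1}\chi_{B_j}$, built exactly as above but in the dual scale, with weights $\mu_j(z)=\|b_j\|_{E^*}^{\,p'\alpha^*(z)}$ for $\alpha^*(z)=(1-z)/p_1'+z/p_2'$; since $\alpha^*(\theta)=1/p'$ one gets $H(\theta)=h$, while the boundary $L^{p_1'}(\Omega;X_1^*)$ and $L^{p_2'}(\Omega;X_2^*)$ norms of $H$ are at most $1$. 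The scalar function $\Phi(z)=\int_\Omega\skp{F(z)(\omega)}{H(z)(\omega)}\d\mu(\omega)$ is bounded and analytic on the strip, and Hölder's inequality in $\omega$ pairing $L^{p_j}(\Omega;X_j)$ with $L^{p_j'}(\Omega;X_j^*)$ gives $|\Phi(it)|\le\|F\|_{\mathcal F}$ and $|\Phi(1+it)|\le\|F\|_{\mathcal F}$; the three-lines lemma then forces $|\skp{f}{h}|=|\Phi(\theta)|\le\|F\|_{\mathcal F}$, and taking the supremum over $h$ and the infimum over $F$ yields $\|f\|_{L^p(\Omega;E)}\le\|f\|_\theta$.

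I expect the genuine difficulty to reside entirely in this second inclusion, and there in two places: first, justifying that the $L^p(\Omega;E)$ norm is correctly computed by pairing against simple $E^*$-valued functions, which keeps me from invoking the Radon--Nikodym property of $E^*$ and lets me stay on the safe, contractive side of the duality $[X_1,X_2]_\theta^*\supseteq[X_1^*,X_2^*]^\theta$ with pairings consistent across the scale; and second, verifying that $\Phi$ is genuinely bounded and analytic despite $F$ and $H$ taking values only in sums of the respective $L^{p_j}$ spaces, which is exactly where the decay factors $e^{\delta(z^2-\theta^2)}$ and the finiteness of the measures $\mu(A_k),\mu(B_j)$ enter. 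The endpoint case $p_2=\infty$ must then be recovered by the standard modification in which the $L^\infty$ boundary is handled through a predual pairing rather than directly.
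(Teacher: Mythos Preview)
The paper does not prove this statement; it is quoted from \cite[Theorem 2.2.6]{Banah} and used as a black box, so there is no in-paper proof to compare your proposal against. Your sketch is the standard two-inclusion argument (construct an admissible $F$ from a simple $f$ via the weights $\|a_k\|_E^{p\alpha(z)}$ for the easy direction; dualize with simple $X_1^*\cap X_2^*$-valued testers and apply the three-lines lemma for the reverse), which is essentially how the cited reference and Bergh--L\"ofstr\"om proceed, and the subtleties you flag---norming $E=[X_1,X_2]_\theta$ by the upper dual interpolation space rather than all of $E^*$, and the analyticity of the scalar pairing $\Phi$---are the correct ones.
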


Also, we need the interpolation result for Schatten classes.

\begin{theorem}\cite[Theorem D.3.1]{Banah}
    Let $\mathcal C^p$, $1\le p\le +\infty$ denote the Schatten class of compact operators on some Hilbert space $H$. Then for all $\theta\in[0,1]$ we have
    $$[\mathcal C^{p_1},\mathcal C^{p_2}]_\theta=\mathcal C^p,$$
    where $1/p=(1-\theta)/p_1+\theta/p_2$.
\end{theorem}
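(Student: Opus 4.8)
The plan is to prove the two contractive inclusions $\mathcal C^p\hookrightarrow[\mathcal C^{p_1},\mathcal C^{p_2}]_\theta$, with $\|A\|_\theta\le\|A\|_p$, and $[\mathcal C^{p_1},\mathcal C^{p_2}]_\theta\hookrightarrow\mathcal C^p$, with $\|A\|_p\le\|A\|_\theta$, which together give the asserted isometry. Throughout I would exploit the singular-value formula $\|A\|_r=\big(\sum_n s_n(A)^r\big)^{1/r}$ together with the trace-duality characterization
\[
\|A\|_p=\sup\{|\tr(AB)|:B\text{ finite rank},\ \|B\|_{p'}\le1\},\qquad \tfrac1p+\tfrac1{p'}=1,
\]
valid (with value $+\infty$ when $A\notin\mathcal C^p$) for every compact $A$. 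Since finite-rank operators are dense in each $\mathcal C^r$, $r<\infty$, I would first prove the upper bound for finite-rank $A$, where all sums are finite, and then extend it by density.

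For the inclusion $\mathcal C^p\hookrightarrow[\mathcal C^{p_1},\mathcal C^{p_2}]_\theta$, fix a finite-rank $A$ with polar and spectral decompositions $A=U|A|$, $|A|=\sum_n s_n\skp{\cdot}{e_n}e_n$, and assume after scaling that $\|A\|_p=1$. I would set
\[
F(z)=U|A|^{\beta(z)},\qquad \beta(z)=p\Big(\frac{1-z}{p_1}+\frac z{p_2}\Big),\qquad |A|^{\beta(z)}=\sum_n s_n^{\beta(z)}\skp{\cdot}{e_n}e_n.
\]
Then $\beta(\theta)=1$ gives $F(\theta)=A$, and $F$ is entire and bounded on the strip. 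On the left boundary $\re\beta(it)=p/p_1$, so $|s_n^{\beta(it)}|=s_n^{p/p_1}$ and $\|F(it)\|_{p_1}^{p_1}=\sum_n s_n^{p}=1$; symmetrically $\|F(1+it)\|_{p_2}=1$. Hence $F$ belongs to the admissible class $\mathcal F$ with $\|F\|_{\mathcal F}\le1$, so $\|A\|_\theta\le1=\|A\|_p$. Approximating a general $A\in\mathcal C^p$ by finite-rank operators and using that $[\mathcal C^{p_1},\mathcal C^{p_2}]_\theta$ embeds continuously into $\mathcal C^{p_1}+\mathcal C^{p_2}$ extends this to all of $\mathcal C^p$.

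For the reverse inclusion I would combine trace duality with the three-lines theorem. Let $A=F(\theta)$ for some $F\in\mathcal F$ with $\|F\|_{\mathcal F}\le\|A\|_\theta+\e$, and let $B$ be finite rank with $\|B\|_{p'}\le1$. Applying the construction above to $B$ with the dual exponents $p_1',p_2'$ (note $1/p'=(1-\theta)/p_1'+\theta/p_2'$) yields a finite-rank analytic family $G$ with $G(\theta)=B$, $\|G(it)\|_{p_1'}\le1$ and $\|G(1+it)\|_{p_2'}\le1$. Since $G(z)$ is finite rank, $g(z)=\tr\big(F(z)G(z)\big)$ is a well-defined analytic scalar function, bounded on the strip because $\|F(z)\|_\infty\le\|F(z)\|_{\mathcal C^{p_1}+\mathcal C^{p_2}}$ is bounded there and $\|G(z)\|_1$ is a finite, continuous sum. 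Hölder's inequality for Schatten norms gives $|g(it)|\le\|F(it)\|_{p_1}\|G(it)\|_{p_1'}\le\|F\|_{\mathcal F}$ and likewise $|g(1+it)|\le\|F\|_{\mathcal F}$, so the three-lines theorem yields $|g(\theta)|=|\tr(AB)|\le\|F\|_{\mathcal F}$. Taking the supremum over $B$ and letting $\e\to0$ gives $\|A\|_p\le\|A\|_\theta$.

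The steps I expect to require the most care are the two places where analyticity meets integrability. In the first family the sum $\sum_n s_n^{\beta(z)}$ may diverge in the open strip even though $\sum_n s_n^p<\infty$ (the exponent $\re\beta$ can drop below $p$), which is precisely why I would carry out the density reduction and build $F$ only for finite-rank $A$. In the second family the delicate point is the boundedness of $g$ on the strip demanded by the three-lines theorem; keeping the test operator $B$, and hence $G$, finite rank makes $F(z)G(z)$ finite rank with uniformly controlled trace norm and removes this difficulty. A final, minor point is the verification at the endpoints $p_1=\infty$ or $p_2=\infty$, where $\mathcal C^\infty$ is the ideal of compact operators and the singular-value sums degenerate into suprema, so the boundary estimates must be rechecked directly.
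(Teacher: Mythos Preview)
The paper does not supply a proof of this theorem; it merely quotes the result from \cite[Theorem D.3.1]{Banah} and uses it as a black box to obtain Corollary~\ref{InterpolationLC}. So there is no ``paper's proof'' to compare against.

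Your proposal is the standard argument for the complex interpolation of Schatten ideals and is essentially correct. The analytic family $F(z)=U|A|^{\beta(z)}$ with $\beta(z)=p\big((1-z)/p_1+z/p_2\big)$ is exactly the device used in the reference you would be reproducing, and the reverse inequality via trace duality against a finite-rank family $G(z)$ and the three-lines theorem is likewise the usual route. Your caveats are well placed: restricting to finite-rank $A$ before building $F$ avoids divergence of $\sum_n s_n^{\beta(z)}$ in the open strip, and keeping $B$ (hence $G$) finite rank is what makes $g(z)=\tr\bigl(F(z)G(z)\bigr)$ bounded and analytic so that the three-lines theorem applies. The only place to be slightly more careful is the density step in the first inclusion: you should argue that $\|A\|_\theta\le\liminf\|A_n\|_\theta$ for a $\mathcal C^p$-approximating sequence $(A_n)$, which follows because $\mathcal C^p\hookrightarrow\mathcal C^{p_1}+\mathcal C^{p_2}$ continuously and $[\mathcal C^{p_1},\mathcal C^{p_2}]_\theta$ is a Banach space intermediate between $\mathcal C^{p_1}\cap\mathcal C^{p_2}$ and $\mathcal C^{p_1}+\mathcal C^{p_2}$; alternatively, one can bypass density by noting that the interpolation norm is the same on the closure of finite-rank operators. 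The endpoint $p_j=\infty$ indeed needs a separate check, since then $\mathcal C^\infty$ is the ideal of compacts with the operator norm and the boundary estimate becomes $\|F(j+it)\|_\infty=\sup_n s_n^{\,p\cdot0}\cdot\|U\|=1$ only after interpreting $|A|^0$ as the support projection of $|A|$; this is routine but worth stating.
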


As a corollary of preceding two theorems we have

\begin{corollary}\label{InterpolationLC}
    Let $1\le p_1$, $p_2$, $q_1$, $q_2\le+\infty$, let $\theta\in[0,1]$ and let $(\Omega,\mu)$ be some measure space. Then
    $$[L^{p_1}(\Omega;\mathcal C^{q_1}),L^{p_2}(\Omega;\mathcal C^{q_2})]_\theta=L^p(\Omega;\mathcal C^q),$$
    where $1/p=(1-\theta)/p_1+\theta/p_2$ and $1/q=(1-\theta)/q_1+\theta/q_2$.
\end{corollary}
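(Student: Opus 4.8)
The plan is to obtain the statement by simply concatenating the two preceding theorems. First I would fix the underlying Hilbert space $H$ and regard $\mathcal C^{q_1}$ and $\mathcal C^{q_2}$ as a compatible pair in the sense of Section 2.4: both are continuously embedded in $B(H)$ (equivalently, in $\mathcal C^{r}$ with $r=\max\{q_1,q_2\}$), which plays the role of the ambient Hausdorff topological vector space. Hence $(X_1,X_2):=(\mathcal C^{q_1},\mathcal C^{q_2})$ is an interpolation couple, and the Schatten interpolation theorem quoted above (\cite[Theorem D.3.1]{Banah}) applies and yields
$$[\mathcal C^{q_1},\mathcal C^{q_2}]_\theta=\mathcal C^q,\qquad \frac1q=\frac{1-\theta}{q_1}+\frac{\theta}{q_2},$$
isometrically.

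Next I would feed this couple into Theorem \ref{T1}, taking the measure space there to be $(\Omega,\mu)$, the Banach couple to be $(X_1,X_2)=(\mathcal C^{q_1},\mathcal C^{q_2})$, and the same $\theta$. That theorem then gives
$$[L^{p_1}(\Omega;\mathcal C^{q_1}),L^{p_2}(\Omega;\mathcal C^{q_2})]_\theta=L^{p}\!\left(\Omega;[\mathcal C^{q_1},\mathcal C^{q_2}]_\theta\right),\qquad \frac1p=\frac{1-\theta}{p_1}+\frac{\theta}{p_2},$$
again isometrically. Substituting the identification from the previous step into the inner Banach-space slot replaces $[\mathcal C^{q_1},\mathcal C^{q_2}]_\theta$ by $\mathcal C^q$, which is precisely the asserted equality with $L^p(\Omega;\mathcal C^q)$; since each of the two steps is isometric, so is the composition, matching the ``isometrically'' in Theorems \ref{T1} and D.3.1.

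The argument is essentially bookkeeping, so the only genuine point to watch is that the hypotheses of Theorem \ref{T1} are actually met: its statement restricts the outer exponents (one needs $p_1,p_2<\infty$, with a side condition in the endpoint case when an exponent equals $\infty$), so for $p_1=\infty$ or $p_2=\infty$ one should either invoke the corresponding refinement or simply observe that these cases are never used in what follows — in Theorem \ref{TheoremPQ} only finite exponents with $1<p\le 2$ occur. A secondary, purely formal check is that the values $q_i=\infty$ (where $\mathcal C^\infty$ denotes the compact operators) are still covered: the Schatten interpolation theorem above is stated for the full range $1\le q\le\infty$, so no extra work is needed there. With these remarks in place the corollary follows.
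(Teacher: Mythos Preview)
Your argument is correct and is exactly what the paper does: it simply states the result as an immediate consequence of the two preceding theorems, combining $[\mathcal C^{q_1},\mathcal C^{q_2}]_\theta=\mathcal C^q$ with Theorem~\ref{T1}. Your additional remarks about the endpoint restrictions on $p_1,p_2$ in Theorem~\ref{T1} are a welcome clarification that the paper itself glosses over.
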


\section{Main results}

First, we establish Parseval identity for functions from Bochner spaces, which is the key technical tool in this paper.

\begin{theorem}\label{ParsevalThm} Let $G$ be a locally compact abelian group, let $\hat G$ be its Pontryagin dual. For $A_\theta\in L^2(G;B(H))\cap L^1(G;B(H))$, and $\xi\in\hat G$, the operators
$$B_\xi=\int_G \overline{\xi(\theta)}A_{\theta}\d\mu(\theta)$$
are well defined and also
\begin{equation}\label{AbstractParseval}
\int_{\hat G}|B_\xi|^2\d\nu(\xi)=\int_G|A_\theta|^2\d\mu(\theta),
\end{equation}
\end{theorem}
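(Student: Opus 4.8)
The plan is to reduce the operator-valued Parseval identity to the scalar Plancherel theorem by testing against rank-one operators. First I would check that the $B_\xi$ are well defined: since $A_\theta\in L^1(G;B(H))$ and $|\overline{\xi(\theta)}|=1$, the integrand $\overline{\xi(\theta)}A_\theta$ is again in $L^1(G;B(H))$, so the Bochner integral exists for every $\xi$; measurability of $\xi\mapsto B_\xi$ (weak or weak-$*$) follows by the usual Fubini/density argument. Next, fix $g,h\in H$ and set $f_{g,h}(\theta)=\skp{A_\theta g}{h}$. By \eqref{BochnerInner} we have $\skp{B_\xi g}{h}=\int_G\overline{\xi(\theta)}\skp{A_\theta g}{h}\d\mu(\theta)=\widehat{f_{g,h}}(\xi)$, i.e.\ the matrix coefficients of $B_\xi$ are exactly the Fourier transforms of the matrix coefficients of $A_\theta$. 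One must observe that $f_{g,h}\in L^1(G)\cap L^2(G)$: the $L^1$ bound comes from $|\skp{A_\theta g}{h}|\le\|A_\theta\|\,\|g\|\,\|h\|$ together with $A_\theta\in L^1(G;B(H))$, and likewise for $L^2$.

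The core step is then to identify $\skp{|B_\xi|^2 g}{g}$ and $\skp{|A_\theta|^2 g}{g}$ with squared $L^2$-norms of scalar functions. We have $\skp{|A_\theta|^2 g}{g}=\|A_\theta g\|^2=\sum_n|\skp{A_\theta g}{e_n}|^2$ for an orthonormal basis $(e_n)$ of $H$, and similarly $\skp{|B_\xi|^2 g}{g}=\|B_\xi g\|^2=\sum_n|\skp{B_\xi g}{e_n}|^2=\sum_n|\widehat{f_{g,e_n}}(\xi)|^2$. Integrating in $\xi$ over $\hat G$ and applying the scalar Plancherel theorem termwise (each $f_{g,e_n}\in L^1(G)\cap L^2(G)$, so $\|\widehat{f_{g,e_n}}\|_{L^2(\hat G)}=\|f_{g,e_n}\|_{L^2(G)}$ with the normalization of $\nu$ fixed by the inversion formula) gives
$$
\int_{\hat G}\skp{|B_\xi|^2 g}{g}\d\nu(\xi)=\sum_n\int_{\hat G}|\widehat{f_{g,e_n}}(\xi)|^2\d\nu(\xi)=\sum_n\int_G|\skp{A_\theta g}{e_n}|^2\d\mu(\theta)=\int_G\skp{|A_\theta|^2 g}{g}\d\mu(\theta),
$$
where the interchange of $\sum_n$ and $\int_{\hat G}$ is justified by Tonelli since all terms are nonnegative. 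This yields $\skp{\big(\int_{\hat G}|B_\xi|^2\d\nu\big)g}{g}=\skp{\big(\int_G|A_\theta|^2\d\mu\big)g}{g}$ for all $g$, hence the operator identity \eqref{AbstractParseval} by polarization (or directly, since a bounded self-adjoint operator is determined by its quadratic form).

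The main obstacle is a bookkeeping one rather than a deep one: one must make sure both sides of \eqref{AbstractParseval} are genuinely well-defined Bochner integrals valued in $B(H)$ (or at least in $\Sc_1$, so that the quadratic-form identity pins down the operators), and that the termwise application of Plancherel and the Tonelli interchange are legitimate. The delicate point is that $\int_{\hat G}|B_\xi|^2\d\nu(\xi)$ need not converge in $B(H)$-norm; the cleanest route is to first prove the scalar identity $\int_{\hat G}\|B_\xi g\|^2\d\nu=\int_G\|A_\theta g\|^2\d\mu$ for each $g$ (which in particular shows $\xi\mapsto B_\xi$ lies in $L^2(\hat G;B(H))$ in the weak sense and that the quadratic form $g\mapsto\int_{\hat G}\skp{|B_\xi|^2g}{g}\d\nu$ is bounded), and only afterwards upgrade to the operator statement. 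Verifying that $\xi\mapsto|B_\xi|^2$ is weakly (or weak-$*$) measurable so that the left-hand Bochner integral makes sense — here the remark after \eqref{BochnerInner} about \cite[Lemma 1.2]{Krtinic} is exactly what is needed — completes the argument.
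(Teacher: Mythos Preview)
Your proposal is correct and follows essentially the same route as the paper: both arguments reduce the operator identity to the scalar Plancherel theorem by expanding $\|A_\theta g\|^2$ and $\|B_\xi g\|^2$ along an orthonormal basis, apply Plancherel to each coefficient function $f_{g,e_n}(\theta)=\skp{A_\theta g}{e_n}\in L^1(G)\cap L^2(G)$, interchange the sum over $n$ with the integral over $\hat G$ by Tonelli, and then pass from the quadratic-form identity to the operator equality via \eqref{BochnerInner}. Your discussion of the well-definedness of $\int_{\hat G}|B_\xi|^2\d\nu(\xi)$ is slightly more explicit than the paper's, but the underlying argument is the same.
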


\begin{proof}
Since $\||A_\theta|^2\|=\|A_\theta\|^2$ and $A_\theta\in L^2(G;B(H))$, the integral
$$\int_G|A_\theta|^2\d\mu(\theta)$$
exists. By (\ref{BochnerInner}) we obtain
$$\skp{\int_G|A_{\theta}|^2\d\mu(\theta)h}{h}=\int_G\skp{|A_{\theta}|^2h}{h}\d\mu(\theta).$$

On the other hand, since $|\xi(\theta)|=1$ and  $A_{\theta}\in L^1(G;B(H))$ we have
\begin{align*}
    \int_{G}\|\overline{\xi(\theta)}A_{\theta}\|\d\mu(\theta)=\int_{G}\|A_{\theta}\|\d\mu(\theta)<\infty
\end{align*}
and $B_\xi$ are well defined.

Let $\{e_n\}_{n=1}^{\infty}$ be an arbitrary orthonormal basis of $H$. Then for all $h\in H$
\begin{align*}
    \skp{\int_{G}|A_{\theta}|^2\d\mu(\theta)\,h}{h}&=\int_G\|A_{\theta }h\|^2\d\mu(\theta)=\int_{G}\sum_{m=1}^{\infty}\left|\skp{A_{\theta}h}{e_m}\right|^2\d\mu(\theta)\\
    &=\sum_{m=1}^{\infty}\int_{G}\left|\skp{A_{\theta}h}{e_m}\right|^2\d\mu(\theta).
\end{align*}
The function $f\colon G\to\C$, $f(\theta)=\skp{A_{\theta}h}{e_m}$ belongs to $L^1(G)\cap L^2(G)$ for every $h\in H$ and $e_m$ since
$$\int_G|\skp{A_{\theta}h}{e_m}|\d\mu(\theta)\leq\|h\|\int_G\|A_{\theta}\|\d\mu(\theta)<\infty$$
and
$$\int_G|\skp{A_{\theta}h}{e_m}|^2\d\mu(\theta)\leq\|h\|^2\int_G\|A_{\theta}\|^2\d\mu(\theta)<\infty.$$

By the Plancherel theorem (\cite[Theorem 4.26]{Foland}) we have
$$\int_G|f(\theta)|^2\d\mu(\theta)=\int_{\hat G}|\hat{f}(\xi)|^2\d\nu(\xi),$$
where $\hat{f}$ the Fourier transform of $f$ on $G$, and hence
$$\int_{G}\left|\skp{A_{\theta}h}{e_m}\right|^2\d\mu(\theta)=\int_{\hat G}\left|\int_G\overline{\xi(\theta)}\skp{A_{\theta}h}{e_m}\d\mu(\theta)\right|^2\d\nu(\xi).$$
Therefore,
\begin{align*}
    \sum_{m=1}^{\infty}\int_{G}\left|\skp{A_{\theta}h}{e_m}\right|^2\d\mu(\theta)&=\sum_{m=1}^{\infty}\int_{\hat G}\left|\int_G\overline{\xi(\theta)}\skp{A_{\theta}h}{e_m}\d\mu(\theta)\right|^2\d\nu(\xi)\\
&=\sum_{m=1}^{\infty}\int_{\hat G}\left|\int_G\skp{\overline{\xi(\theta)}A_{\theta}h}{e_m}\d\mu(\theta)\right|^2\d\nu(\xi)\\
&=\int_{\hat G}\sum_{m=1}^{\infty}\left|\skp{\int_G\overline{\xi(\theta)}A_{\theta}\d\mu(\theta)h}{e_m}\right|^2\d\nu(\xi).
\end{align*}
The last equality follows from (\ref{BochnerInner}) because $\skp{\overline{\xi(\theta)}A_{\theta}h}{h}\in L^1(G,\mu)$ for all $h\in H$. Hence,
\begin{align*}
    \skp{\int_{G}|A_{\theta}|^2\d\mu(\theta)\,h}{h}&=\int_{\hat G}\sum_{m=1}^{\infty}\left|\skp{\int_G\overline{\xi(\theta)}A_{\theta}\d\mu(\theta)\,h}{e_m}\right|^2\d\nu(\xi)\\
    &=\int_{\hat{G}}\sum_{m=1}^{\infty}\left|\skp{B_\xi h}{e_m}\right|^2\d\nu(\xi)=\int_{\hat G}\|B_\xi h\|^2\d\nu(\xi)\\
    &=\int_{\hat{G}}\skp{|B_\xi|^2h}{h}\d\nu(\xi)=\skp{\int_{\hat G}|B_\xi|^2\d\nu(\xi)h}{h}.
\end{align*}
The last equality, again, follows from (\ref{BochnerInner}), since $\skp{|B_\xi|^2h}{h}\in L^1(\hat{G},\nu)$ for all $h\in H$. Thus,
$$\int_{G}|A_{\theta}|^2\d\mu(\theta)=\int_{\hat G}|B_\xi|^2\d\nu(\xi).$$
\end{proof}

\begin{theorem}\label{TheoremPQ}
Let $G$ be a locally compact group with Haar measure $\mu$, let $\hat G$ be its Pontryagin dual with the dual measure $\nu$. For all $1\le p\le 2$, and $A_\theta\in L^1(G;\Sc_p)\cap L^2(G;\Sc_p)$ there holds
$B_{\xi}\in L^q(\hat{G},\Sc_p)$ and
\begin{equation}\label{Inequalityp<2}
\int_{\hat G}\left\|\int_GA_\theta\ovl{\xi(\theta)}\d\mu(\theta)\right\|_p^q\d\nu(\xi)\le
    \left(\int_G\|A_\theta\|_p^p\d\mu(\theta)\right)^{q/p},
\end{equation}
where $q$ is conjugate to $p$, i.e.\ $q=p/(p-1)$, $p\neq1$ and $q=+\infty$ if $p=1$.
\end{theorem}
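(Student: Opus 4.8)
The natural strategy is complex interpolation between two endpoint cases, using the Parseval identity of Theorem \ref{ParsevalThm} as the $p=2$ endpoint and a trivial $L^1\to L^\infty$ estimate as the $p=1$ endpoint. Concretely, consider the linear map $\mathcal F\colon A_\theta\mapsto B_\xi=\int_G A_\theta\ovl{\xi(\theta)}\,\d\mu(\theta)$, the abstract (operator-valued) Fourier transform. First I would record the endpoint $p=1$: for $A_\theta\in L^1(G;\Sc_1)$ we have $\|B_\xi\|_1\le\int_G\|A_\theta\|_1\,\d\mu(\theta)$ for every $\xi$ (since $|\ovl{\xi(\theta)}|=1$ and the $\Sc_1$-norm is subadditive under the Bochner integral), hence $\mathcal F\colon L^1(G;\Sc_1)\to L^\infty(\hat G;\Sc_1)$ with norm $\le1$. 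Second, the endpoint $p=2$: from \eqref{AbstractParseval} we get $\int_{\hat G}\||B_\xi|^2\|_1\,\d\nu(\xi)=\int_G\||A_\theta|^2\|_1\,\d\mu(\theta)$, that is $\int_{\hat G}\|B_\xi\|_2^2\,\d\nu(\xi)=\int_G\|A_\theta\|_2^2\,\d\mu(\theta)$, so $\mathcal F\colon L^2(G;\Sc_2)\to L^2(\hat G;\Sc_2)$ is an isometry, in particular bounded with norm $\le1$.

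Now interpolate. With $\theta\in(0,1)$ chosen so that $1/p=(1-\theta)/1+\theta/2$ (equivalently $\theta=2(p-1)/p=2/q$), Corollary \ref{InterpolationLC} identifies $[L^1(G;\Sc_1),L^2(G;\Sc_2)]_\theta=L^p(G;\Sc_p)$ on the source side. On the target side the same corollary gives $[L^\infty(\hat G;\Sc_1),L^2(\hat G;\Sc_2)]_\theta=L^r(\hat G;\Sc_p)$ where $1/r=(1-\theta)/\infty+\theta/2=\theta/2=1/q$, i.e.\ $r=q$. Theorem \ref{ExactInterpolation} then yields $\mathcal F\colon L^p(G;\Sc_p)\to L^q(\hat G;\Sc_p)$ with $\|\mathcal F\|\le\|\mathcal F\|_{B(L^1(G;\Sc_1),L^\infty(\hat G;\Sc_1))}^{1-\theta}\|\mathcal F\|_{B(L^2(G;\Sc_2),L^2(\hat G;\Sc_2))}^{\theta}\le1^{1-\theta}\cdot1^\theta=1$. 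Unwinding the norms, $\left(\int_{\hat G}\|B_\xi\|_p^q\,\d\nu(\xi)\right)^{1/q}\le\left(\int_G\|A_\theta\|_p^p\,\d\mu(\theta)\right)^{1/p}$, which is \eqref{Inequalityp<2} after raising to the $q$-th power; the membership $B_\xi\in L^q(\hat G;\Sc_p)$ is part of the conclusion. The case $p=1$ is the endpoint estimate itself.

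The main technical point to be careful about is that interpolation is being applied to a \emph{single} linear map acting simultaneously on the four spaces, so I must check that $\mathcal F$ is well-defined and consistent on $L^1(G;\Sc_1)+L^2(G;\Sc_2)$ and maps into $L^\infty(\hat G;\Sc_1)+L^2(\hat G;\Sc_2)$ inside a common ambient space — this is exactly why the hypothesis $A_\theta\in L^1\cap L^2$ (with values in $\Sc_p\subseteq\Sc_2\cap\Sc_1$? no: $\Sc_p\supseteq\Sc_1$, $\Sc_p\subseteq\Sc_2$ for $1\le p\le2$, fine) is imposed, and why Theorem \ref{ParsevalThm} was proved at the level of $B(H)$-valued functions first. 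I would verify that for $A_\theta\in L^1(G;\Sc_p)\cap L^2(G;\Sc_p)$ the operator $B_\xi$ defined by the Bochner integral agrees with the one from Theorem \ref{ParsevalThm}, so that the abstract interpolation machine really computes the quantity in \eqref{Inequalityp<2}. A secondary (routine) check is the density/consistency needed so that the norm bound obtained for the interpolated operator applies to the concrete integral $B_\xi$; this follows from the isometric identifications in Corollary \ref{InterpolationLC} together with the fact that $\mathcal F$ is defined by the same formula on the intersection. No genuine obstacle is expected beyond this bookkeeping; the inequality is, in the end, Hausdorff–Young for operator-valued functions, and the proof mirrors the scalar case.
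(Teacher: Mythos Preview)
Your proposal is correct and follows essentially the same route as the paper: establish the $L^1(G;\Sc_1)\to L^\infty(\hat G;\Sc_1)$ endpoint trivially, convert the operator Parseval identity \eqref{AbstractParseval} into the $L^2(G;\Sc_2)\to L^2(\hat G;\Sc_2)$ isometry by applying $\tr$ through the Bochner integral, and then interpolate via Corollary~\ref{InterpolationLC} and Theorem~\ref{ExactInterpolation} with parameter $\theta=2-2/p$. The consistency and density bookkeeping you flag is exactly what the paper records before invoking interpolation.
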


\begin{remark}
    Of course, (\ref{Inequalityp<2}) doesn't make sense for $q=+\infty$. However, it should be considered as the power "to the $q$" is moved from the right hand side to the left and then the limit as $q\to+\infty$ is taken, i.e.
    $$\sup_{\xi\in\hat G}\left\|\int_GA_\theta\ovl{\xi(\theta)}\d\mu(\theta)\right\|_1\le\int_G\|A_\theta\|_1\d\mu(\theta).$$
    The same apply for corollaries stated in Theorems \ref{TheoremPQRn}, \ref{TheoremPQ2} and \ref{TheoremPQweighted}.
\end{remark}

\begin{proof} It is well known that for any $A\in\Sc_p$ we have $\|A\|\le\|A\|_p$. Hence $A_\theta\in L^1(G;\Sc_p)\cap L^2(G;\Sc_p)$ implies $A_\theta\in L^1(G;B(H))\cap L^2(G;B(H))$, and consequently, (\ref{AbstractParseval}) holds. Therefore, by (\ref{BochnerLinear}) with $T=\tr$, we obtain
\begin{align*}
    \int_{\hat G}\|B_{\xi}\|_2^2\d\nu(\xi)&=\int_{\hat G}\tr(|B_{\xi}|^2)\d\nu(\xi)=\tr\int_{\hat G}|B_{\xi}|^2\d\nu(\xi)=\tr\int_{G}|A_{\theta}|^2\d\mu(\theta)\\
&=\int_G\tr(|A_{\theta}|^2)\d\mu(\theta)=\int_G\|A_{\theta}\|_2^2\d\mu(\theta).
\end{align*}

It means that the linear mapping $A_\theta\mapsto B_\xi$, defined on the dense set $L^1(G;\Sc_2)\cap L^2(G;\Sc_2)$ in $L^2(G;\Sc_2)$ is an isometry. Hence, it can be extended to an isometry from the whole $L^2(G;\Sc_2)$ to $L^2(\hat G;\Sc_2)$. Denote this isometry by $T$.

On the other hand, if $A_\theta\in L^1(G;\Sc_1)\cap L^2(G;\Sc_1)$ (which is dense in $L^1(G;\Sc_1)$), then
$$\|B_\xi\|_1=\left\|\int_GA_\theta\overline{\xi(\theta)}\d\mu(\theta)\right\|_1\le\int_G\|A_\theta\|_1\d\mu(\theta)=\|A_\theta\|_{L^1(G;\Sc_1)},$$
and taking a supremum over all $\xi\in\hat G$
$$\|B_\xi\|_{L^\infty(\hat G;\Sc_1)}\le\|A_\theta\|_{L^1(G;\Sc_1)}.$$

This leads to a bounded, densely defined operator $L^1(G;\Sc_1)\supseteq L^1(G;\Sc_1)\cap L^2(G;\Sc_1)\ni A_\theta\mapsto B_\xi\in L^\infty(\hat G;\Sc_1)$, with norm at most $1$. It can be extended, by continuity, to a bounded operator on the whole $L^1(G;\Sc_1)$. Denote it again by $T$.

To summarize, we have a linear operator $T$, $T(A_\theta)=B_\xi$, defined on $L^1(G;\Sc_1)$ and $L^2(G;\Sc_2)$, which coincide on its intersection $L^1(G;\Sc_1)\cap L^2(\Sc_2)=L^1(G;\Sc_1)\cap L^2(\Sc_1)$ (since $\Sc_1\subseteq\Sc_2$) with an explicit form $B_\xi=\int_GA_\theta\overline{\xi(\theta)}\d\mu(\theta)$.

Now, we are ready to apply complex interpolation. Namely, by Corollary \ref{InterpolationLC}, we obtain
$$[L^1(G;\mathcal C_1),L^2(G,\mathcal C_2)]_{2-\frac{2}{p}}=L^p(G,\mathcal C_p),$$
$$[L^{\infty}(\hat{G};\mathcal C_1),L^2(\hat{G},\mathcal C_2)]_{2-\frac{2}{p}}=L^q(\hat{G},\mathcal C_p),$$
where $q$ is conjugated to $p$. Finally, by Theorem \ref{ExactInterpolation} we obtain $\|T\|\le 1$ considering $T$ as an operator from $L^p(G;\Sc_p)$ to $L^q(\hat G;\Sc_p)$. For $A_\theta\in L^1(G;\Sc_p)\cap L^2(G;\Sc_p)\subseteq L^p(G;\Sc_p)$ it becomes
$$\left(\int_{\hat G}\|B_\xi\|_p^q\d\nu(\xi)\right)^{\frac1q}\le\left(\int_G\|A_\theta\|_p^p\d\mu(\theta)\right)^{\frac1p},$$
i.e.~(\ref{Inequalityp<2}).
\end{proof}

\section{Applications}

Note, first, that in the case where the Hilbert space is one-dimensional, the function $A_\theta$ reduces to scalar valued function. Hence, our inequality (\ref{Inequalityp<2}) reduces to the Hausdorff-Young inequality \cite[(4.27)]{Foland}.

In the next two subsections, we apply our main results to some special cases of $G$, and to weighted Schatten classes.

\subsection{Applications by specifying the group $G$}

If $G$ is a compact group, then Theorem \ref{TheoremPQ} becomes \cite[Theorem 3.6]{Keckic2019}. Exactly which inequalities we can obtain for specific groups $G$ is described in section 4 of \cite{Keckic2019}. For instance, for $G=\Z_2$ we obtain (\ref{Lp<2}), and for $G=\Z_n$ (\ref{Cpn}). In \cite[Corollaries 4.5 and 4.7]{Keckic2019}, cases $G=\T$ and $G=\Z_2^n$ were also treated.

Next, we can take two important cases of non compact groups, $G=\R^n$ and $G=\Q_p^n$.

It is well known that $\R^n$ is a locally compact abelian group and that the Lebesque measure $\d m_n$ is its Haar measure. Its Pontryagin dual is also $\R^n$ with the dual measure $\frac1{(2\pi)^n}\d m_n$.

\begin{remark}
    Sometimes, in order to emphasize selfduality, the Haar measure is rescaled to $\frac1{(2\pi)^{n/2}}\d m_n$. Then it is equal to its dual measure.
\end{remark}

If we apply Theorem \ref{TheoremPQ}, we get

\begin{theorem}\label{TheoremPQRn}
    For all $1\le p\le 2$, and $A_\theta\in L^1(\R^n;\Sc_p)\cap L^2(\R^n;\Sc_p)$ there holds $B_{\xi}\in L^q(\R^n,\Sc_p)$ and
\begin{equation}\label{Inequalityp<2Rn}
\frac1{(2\pi)^n}\int_{\R^n}\left\|\int_{\R^n}A_\theta e^{-i\xi\theta}\d\theta\right\|_p^q\d\xi\le
    \left(\int_{\R^n}\|A_\theta\|_p^p\d\theta\right)^{q/p},
\end{equation}
where $q$ is conjugate to $p$, i.e.\ $q=p/(p-1)$, $p\neq1$ and $q=+\infty$ if $p=1$.
\end{theorem}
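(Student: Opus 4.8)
The plan is to obtain Theorem~\ref{TheoremPQRn} as the specialization of Theorem~\ref{TheoremPQ} to $G=\R^n$; the only task is to make explicit, in this case, the three data entering~(\ref{Inequalityp<2}) --- the group, its Pontryagin dual, and the dual Haar measure --- after which (\ref{Inequalityp<2Rn}) is nothing but (\ref{Inequalityp<2}) rewritten.

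First I would note that $(\R^n,+)$ is a locally compact abelian group and that Lebesgue measure $\d m_n$ is a Haar measure on it, being Borel, regular, and translation invariant. Next I would recall the standard identification of the dual: every continuous character of $\R^n$ is of the form $\theta\mapsto e^{i\xi\cdot\theta}$ for a unique $\xi\in\R^n$ (a continuous homomorphism $\R^n\to\T$ restricted to each coordinate axis has the form $t\mapsto e^{ia_jt}$), and the compact--open topology corresponds to the Euclidean topology in the parameter $\xi$, so $\hat G\cong\R^n$ as topological groups; see \cite[Chapter~4]{Foland}. Under this identification $\ovl{\xi(\theta)}=e^{-i\xi\cdot\theta}$, so the operator $B_\xi=\int_{\R^n}A_\theta e^{-i\xi\cdot\theta}\,\d\theta$ of Theorem~\ref{TheoremPQ} is exactly the one in the present statement.

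It then remains to fix the dual measure $\nu=\hat\mu$. By the normalization adopted in Section~2, $\nu$ is the Haar measure on $\hat G\cong\R^n$ for which the inversion formula $f(t)=\int_{\hat G}\hat f(\xi)\xi(t)\,\d\nu(\xi)$ holds, and the classical computation (for instance, testing on a Gaussian) shows that with $\mu=\d m_n$ one must take $\nu=\frac1{(2\pi)^n}\,\d m_n$. Plugging $\mu=\d\theta$, $\ovl{\xi(\theta)}=e^{-i\xi\cdot\theta}$ and $\d\nu(\xi)=\frac1{(2\pi)^n}\,\d\xi$ into~(\ref{Inequalityp<2}) and pulling the constant $\frac1{(2\pi)^n}$ out of the $\Sc_p$-norm yields precisely $B_\xi\in L^q(\R^n;\Sc_p)$ and~(\ref{Inequalityp<2Rn}), the hypothesis $A_\theta\in L^1(\R^n;\Sc_p)\cap L^2(\R^n;\Sc_p)$ being identical in both; the $p=1$, $q=+\infty$ case is read off in the same way, in the sense of the remark following Theorem~\ref{TheoremPQ}. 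I expect no genuine obstacle here: the single point needing care is the normalization of $\nu$, since an error there is the only way the constant in~(\ref{Inequalityp<2Rn}) could come out wrong, so I would verify it directly against the inversion formula rather than against any particular convention for the Fourier transform.
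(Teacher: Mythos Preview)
Your proposal is correct and follows exactly the paper's approach: the paper simply records that $\R^n$ is locally compact abelian with Haar measure $\d m_n$, that its Pontryagin dual is $\R^n$ with dual measure $\frac1{(2\pi)^n}\d m_n$, and then states Theorem~\ref{TheoremPQRn} as the direct specialization of Theorem~\ref{TheoremPQ}. One small wording slip: the factor $\frac1{(2\pi)^n}$ is not ``pulled out of the $\Sc_p$-norm'' --- it sits in the outer integral as $\d\nu(\xi)=\frac1{(2\pi)^n}\d\xi$ and never enters the norm --- but this does not affect the argument.
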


Our second application concerns $p$-adic groups.

Recall that given a fixed prime number $p$, a nonzero rational $x$ can be written as $x=r^k\frac ab$, with $p\not|ab$ and $k\in\Z$. Then we define its $p$-adic norm by
$$|x|_p=\begin{cases}p^{-k},&x\neq0\\0,&x=0.\end{cases}$$
The $p$-adic group $\Q_p$ is the completion of $\Q$ with respect to $|\cdot|_p$.

The nonzero elements of $\Q_p$ can be identified with formal infinite sums of the form
\begin{equation}\label{p-adic number}
x=\sum_{i=\gamma}^{\infty}x_ip^i,\qquad\gamma\in\mathbb{Z},\quad x_i\in\{0,1,...,p-1\},\quad x_\gamma\neq0,
\end{equation}
with $|x|_p=p^{-\gamma}$. The set of $p$-adic integers are
$$\Z_p=\{x\in\Q_p\mid |x|_p\le1\}.$$

It is well known that $\Q_p$ is a locally compact group, and hence it has its Haar measure, unique up to a scalar. It is usual to normalize Haar measure such that $\Z_p$ be of measure one. This Haar measure is denoted by $\d x$.

Though it is not easy to give an explicit form for Haar measure, it is not the case with the characters. Namely, if $x$ is given by (\ref{p-adic number}) then its fractional part $\{x\}_p$ is equal to $0$ if $\gamma\ge0$ (or $x\in\Z_p$) and
$$\{x\}_p=\sum_{i=\gamma}^{-1}x_ip^i,$$
otherwise. Then the standard character on $\Q_p$ is given by $\chi_p:\Q_p\to\T$, $\chi_p(x)=e^{2\pi i\{x\}_p}$. Obviously $\chi_p(x+y)=\chi_p(x)\chi_p(y)$.

It can be obtained that any character on $\Q_p$ is of the form $x\mapsto\chi_p(\xi x)$ for some $\xi\in\Q_p$. Thus $\hat\Q_p\cong\Q_p$.

The group $\Q_p^n$ can be treated in a similar way. It is also a locally compact group, with the Haar measure $\d^nx$ and $\hat\Q_p^n\cong\Q_p^n$ with the identification
$$\Q_p^n\ni\xi=(\xi_1,\dots,\xi_n)\leftrightarrow\chi_\xi\in\hat\Q_p^n,\qquad
\chi_\xi(x)=\chi(\xi\cdot x)=e^{2\pi i\{\xi_1x_1+\dots+\xi_n x_n\}_p}.$$

The Fourier transform of a function $f\in L^1(\Q_p^n)$ is given by
$$\hat f(\xi)=\int_{\Q_p^n}f(x)e^{2\pi i\xi\cdot x}\d^nx.$$

If we apply Theorem \ref{TheoremPQ} to the group $\Q_p^n$ we obtaine the following corollary. To avoid ambiguity, we write $q$ and $r$ as conjugated ceofficient instead of usual $p$ and $q$, since the letter $p$ is booked for the prime number.

\begin{theorem}\label{TheoremPQ2}
For all $1\le q\le 2$, and $A_\theta\in L^1(\mathbb{Q}_q^n;\Sc_q)\cap L^2(\mathbb{Q}_p^n;\Sc_q)$
$$B_\xi=\int_{\mathbb{Q}_p^n}e^{-2\pi i\{\sum_{i=1}^n x_i\e_i\}_p}A_\theta\d\mu(\theta)\in L^r(\mathbb{Q}_p^n,C_q),$$
and
\begin{equation}\label{Inequalityp<2Padic}
\int_{\mathbb{Q}_p^n}\left\|\int_{\mathbb{Q}_p^n}e^{-2\pi i\{\sum_{i=1}^n x_i\e_i\}_p}A_\theta\d\mu(\theta)\right\|_q^r\le
\left(\int_{\mathbb{Q}_p^n}\|A_\theta\|_q^q\d\mu(\theta)\right)^{r/q},
\end{equation}
where $r$ is conjugate to $q$, i.e.\ $r=q/(q-1)$.
\end{theorem}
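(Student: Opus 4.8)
The plan is to reduce Theorem \ref{TheoremPQ2} directly to Theorem \ref{TheoremPQ} by identifying the group $G=\Q_p^n$ with its Pontryagin dual. First I would recall from the preceding discussion that $\Q_p^n$ is a locally compact abelian group with Haar measure $\d\mu$ (normalized so that $\Z_p^n$ has measure one), and that the dual group $\hat\Q_p^n$ is isometrically isomorphic to $\Q_p^n$ via the pairing $\xi\mapsto\chi_\xi$, $\chi_\xi(x)=e^{2\pi i\{\xi\cdot x\}_p}$. A minor point to address is the normalization of the dual measure $\nu$: since the inversion formula pins $\nu$ down uniquely once $\mu$ is fixed, and since for $\Q_p^n$ this self-dual normalization makes $\nu$ coincide with $\mu$ (the indicator of $\Z_p^n$ is its own Fourier transform up to the chosen normalization), we may take $\d\nu=\d\mu$ on the dual side. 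This is the only genuinely $p$-adic input needed.

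Next I would simply substitute into \eqref{Inequalityp<2}. With $q$ and $r=q/(q-1)$ playing the roles of $p$ and its conjugate in Theorem \ref{TheoremPQ}, the hypothesis $A_\theta\in L^1(\Q_p^n;\Sc_q)\cap L^2(\Q_p^n;\Sc_q)$ is exactly the hypothesis required, and the operator
$$B_\xi=\int_{\Q_p^n}\ovl{\chi_\xi(\theta)}A_\theta\,\d\mu(\theta)=\int_{\Q_p^n}e^{-2\pi i\{\xi\cdot\theta\}_p}A_\theta\,\d\mu(\theta)$$
is precisely the operator $B_\xi$ of Theorem \ref{TheoremPQ} under the identification $\hat\Q_p^n\cong\Q_p^n$. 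Theorem \ref{TheoremPQ} then gives $B_\xi\in L^r(\hat\Q_p^n;\Sc_q)=L^r(\Q_p^n;\Sc_q)$ together with
$$\int_{\Q_p^n}\|B_\xi\|_q^r\,\d\nu(\xi)\le\left(\int_{\Q_p^n}\|A_\theta\|_q^q\,\d\mu(\theta)\right)^{r/q},$$
and replacing $\d\nu$ by $\d\mu$ yields \eqref{Inequalityp<2Padic} verbatim. The only nuisance in the write-up is keeping the notation straight: matching the exponents (the paper's $(p,q)$ becomes the $p$-adic section's $(q,r)$) and reconciling the integration variable names $\theta$, $\xi$, $x_i$, $\e_i$ appearing in the statement, but no new estimate is involved.

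There is no serious obstacle here; this is purely an instantiation of the main theorem, exactly parallel to how Theorem \ref{TheoremPQRn} is obtained for $G=\R^n$. If anything, the one point deserving a sentence of care is verifying that the standing assumption of Theorem \ref{TheoremPQ} — that $G$ be a locally compact abelian group so that Pontryagin duality and the Plancherel theorem underlying Theorem \ref{ParsevalThm} apply — is satisfied by $\Q_p^n$; this is classical and was already recorded above. Everything else is a change of variables and a relabelling.
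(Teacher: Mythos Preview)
Your proposal is correct and matches the paper's approach exactly: the paper presents Theorem \ref{TheoremPQ2} simply as the specialization of Theorem \ref{TheoremPQ} to $G=\Q_p^n$ (introduced with ``If we apply Theorem \ref{TheoremPQ} to the group $\Q_p^n$ we obtain the following corollary'') and gives no separate proof. Your additional remarks on the self-dual normalization of the Haar measure and on reconciling the $(p,q)\leftrightarrow(q,r)$ notation are helpful clarifications but do not depart from the paper's intended argument.
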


\subsection{Weighted Schatten classes}

In \cite{Conde}, the weighted Schatten classes were investigated. As sets, they coincide with the usual Schatten classe, but they have a different norm.

Let $B(H)$ denote the algebra of bounded operators acting on a complex and separable Hilbert space $H$, $\Gl(H)$ the group of invertible elements of $B(H)$ and $\Gl(H)^{+}$ the set of all positive elements of $\Gl(H)$.

For the sake of simplicity, we denote with capital letters the elements of $\Sc_{p}$ and with lower case letters the elements of $\Gl(H)^{+}$.

On $\Sc_{p}$ we define the following norm associated with an invertible $a\in\Gl(H)^{+}$:
$$\|X\|_{p,a}:=\left\|a^{-1/2}Xa^{-1/2}\right\|_{p}.$$

The norm $\|\cdot\|_{p,a}$ is equivalent to $\|\cdot\|_p$, and hence the space $\Sc_{p,a}=(\Sc_p,\|\cdot\|_{p,a})$ is a Banach space. The exact interpolation space for different $a$ and $b$ is described in the following Lemma.

\begin{lemma}\label{L1}\cite[Corollary 3.2]{Conde}
Given invertible $a, b \in\Gl(H)^{+}$and $1\leqslant p<\infty$. Then
$$\left[\Sc_{p,a},\Sc_{p,b}\right]_{t}=\Sc_{p,\gamma_{a,b}(t)},$$
where $\gamma_{(a,b)}(t)=a^{1/2}(a^{-1/2}ba^{-1/2})^{t}a^{1/2}$.
\end{lemma}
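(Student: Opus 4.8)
The plan is to reduce Lemma~\ref{L1} to the known interpolation result for ordinary Schatten classes (Theorem~D.3.1 quoted above, in the special case $p_1=p_2=p$) by conjugating with a suitable fixed invertible operator. The key observation is that the map $\Phi_c\colon X\mapsto c^{-1/2}Xc^{-1/2}$, for $c\in\Gl(H)^+$, is an isometric isomorphism from $\Sc_{p,c}$ onto $\Sc_p$, and more generally from $\Sc_{p,a}$ onto $\Sc_{p,\,c^{-1/2}ac^{-1/2}}$ (this is a routine check: $\|\Phi_c(X)\|_{p,\,c^{-1/2}ac^{-1/2}} = \|(c^{-1/2}ac^{-1/2})^{-1/2}c^{-1/2}Xc^{-1/2}(c^{-1/2}ac^{-1/2})^{-1/2}\|_p$, and one uses the polar-decomposition / unitary-invariance trick to rewrite this as $\|a^{-1/2}Xa^{-1/2}\|_p$).

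First I would fix $a$ and $b$ and apply $\Phi_a$, i.e.\ use $\Phi_a$ as the linear map $T$ in the exact-interpolation Theorem~\ref{ExactInterpolation} (and its inverse for the reverse inclusion), to get the isometric identification
$$
[\Sc_{p,a},\Sc_{p,b}]_t \;=\; \Phi_a^{-1}\big([\Sc_p,\ \Sc_{p,\,a^{-1/2}ba^{-1/2}}]_t\big).
$$
Writing $d=a^{-1/2}ba^{-1/2}\in\Gl(H)^+$, the problem is now reduced to computing $[\Sc_p,\Sc_{p,d}]_t$. Next I would diagonalize $d$: since $\Sc_{p,d}$ depends on $d$ only through the norm $\|d^{-1/2}Xd^{-1/2}\|_p$, and since the weighted norm is, up to the isometry above, built from the spectral data of $d$, I expect $[\Sc_p,\Sc_{p,d}]_t=\Sc_{p,d^{t}}$. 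The cleanest route to this is to invoke Theorem~D.3.1 in a form that handles the weighted (``rescaled'') case, or to pass to the commutative von Neumann algebra generated by $d$ and use the Stein--Weiss interpolation with change of density for $L^p$ of the corresponding trace; either way one gets that interpolating $\Sc_p$ with $\Sc_{p,d}$ replaces the weight $1$ at $t=0$ and $d$ at $t=1$ by $d^{t}$.

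Finally I would transport back: $\Phi_a^{-1}(\Sc_{p,d^{t}})=\Sc_{p,\,a^{1/2}d^{t}a^{1/2}}$ by the isometry property with $c=a$ run in reverse, and $a^{1/2}d^{t}a^{1/2}=a^{1/2}(a^{-1/2}ba^{-1/2})^{t}a^{1/2}=\gamma_{a,b}(t)$, which is exactly the claimed formula. The main obstacle, and the only place genuine work is needed, is the computation $[\Sc_p,\Sc_{p,d}]_t=\Sc_{p,d^{t}}$: one must justify the change-of-weight interpolation for the Schatten norm, which requires either reproducing the complex-interpolation argument (building an analytic family $z\mapsto d^{z}Xd^{-z}$ or $z\mapsto d^{(z-t)/?}\cdots$ adapted to the strip, with the right boundary bounds and the three-lines-type estimate) or citing a suitably general version of the Schatten interpolation theorem that already incorporates densities. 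All the surrounding steps — the isometry $\Phi_c$, the application of Theorem~\ref{ExactInterpolation}, and the final algebraic identification with $\gamma_{a,b}(t)$ — are routine once that core fact is in hand.
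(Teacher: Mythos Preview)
The paper does not prove this lemma at all: it is stated with the citation \cite[Corollary~3.2]{Conde} and used as a black box. So there is no ``paper's own proof'' to compare against; your proposal is a sketch of how one might establish the cited result from scratch.

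Your reduction is correct. The map $\Phi_c\colon X\mapsto c^{-1/2}Xc^{-1/2}$ is indeed an isometric isomorphism $\Sc_{p,w}\to\Sc_{p,\,c^{-1/2}wc^{-1/2}}$ (your polar-decomposition check is right: with $e=c^{-1/2}wc^{-1/2}$ one has $e^{-1/2}c^{-1/2}=Vw^{-1/2}$ for a unitary $V$, so unitary invariance of $\|\cdot\|_p$ gives the isometry). Applying $\Phi_a$ and $\Phi_a^{-1}=\Phi_{a^{-1}}$ together with Theorem~\ref{ExactInterpolation} reduces the claim to $[\Sc_p,\Sc_{p,d}]_t=\Sc_{p,d^t}$ with $d=a^{-1/2}ba^{-1/2}$, and the final algebraic identification $a^{1/2}d^{t}a^{1/2}=\gamma_{a,b}(t)$ is immediate.

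One caution about your framing: you say you are reducing to Theorem~D.3.1 ``in the special case $p_1=p_2=p$'', but that special case is vacuous (it says only $[\Sc_p,\Sc_p]_t=\Sc_p$) and does not touch the weight. The real content, as you yourself acknowledge at the end, is the Stein--Weiss-type identity $[\Sc_p,\Sc_{p,d}]_t=\Sc_{p,d^t}$, which is \emph{not} a consequence of the unweighted Schatten interpolation theorem. Your suggested route --- building the analytic family $F(z)=d^{z/2}\,X\,d^{z/2}$ (or its inverse) on the strip and checking the boundary norms --- is exactly the standard argument, and is in fact how Conde proceeds. So your outline is sound, but be aware that the step you flag as ``the main obstacle'' is the entire content of the lemma; everything else is bookkeeping.
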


For Bochner spaces, related to weighted Schatten ideals, by previous Lemma and Theorem \ref{T1} we immediately obtain:

\begin{lemma}\label{L2}
Let $p\geq1$ and $0<t<1$. Then $$[L^{p}(G;\Sc_{p,a}),L^{p}(S;\Sc_{p.b})]_{t}=L^p(G;\Sc_{p,\gamma_{a,b}(t)})$$ isometrically.
\end{lemma}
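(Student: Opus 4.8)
\textbf{Proof proposal for Lemma \ref{L2}.}
The plan is to obtain the identity as a direct consequence of the abstract vector-valued interpolation theorem (Theorem \ref{T1}) together with the description of the interpolation space for weighted Schatten classes (Lemma \ref{L1}). I would first observe that the pair $(\Sc_{p,a},\Sc_{p,b})$ is a compatible pair: as sets both equal $\Sc_p$, and each norm $\|\cdot\|_{p,a}$, $\|\cdot\|_{p,b}$ is equivalent to $\|\cdot\|_p$ (as recalled just before Lemma \ref{L1}), so both embed continuously into the common Hausdorff space $\Sc_p$. Hence the Bochner spaces $L^p(G;\Sc_{p,a})$ and $L^p(G;\Sc_{p,b})$ are well defined and form a compatible couple in the sense required by Theorem \ref{T1}.

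Next I would apply Theorem \ref{T1} with $p_1=p_2=p$ (so that $p_\theta=p$ for every $\theta$, since $\frac1{p_\theta}=\frac{1-t}{p}+\frac{t}{p}=\frac1p$), the underlying measure space $(\Omega,\mu)=(G,\mu)$, and the interpolation couple of Banach spaces $(X_1,X_2)=(\Sc_{p,a},\Sc_{p,b})$. This yields, isometrically,
\begin{equation*}
[L^{p}(G;\Sc_{p,a}),L^{p}(G;\Sc_{p,b})]_{t}=L^{p}(G;[\Sc_{p,a},\Sc_{p,b}]_{t}).
\end{equation*}
It then remains to substitute Lemma \ref{L1}, which identifies $[\Sc_{p,a},\Sc_{p,b}]_{t}=\Sc_{p,\gamma_{a,b}(t)}$ isometrically, into the right-hand side; composing the two isometric identifications gives $[L^{p}(G;\Sc_{p,a}),L^{p}(G;\Sc_{p,b})]_{t}=L^{p}(G;\Sc_{p,\gamma_{a,b}(t)})$ isometrically, as claimed.

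I do not expect a serious obstacle here, since the statement is essentially a concatenation of two cited results; the only point requiring a word of care is the hypothesis of Theorem \ref{T1}, which in the form quoted asks for either $1\le p_1\le p_2<\infty$ or $p_1\le\infty$ — the diagonal case $p_1=p_2=p<\infty$ comfortably falls under the first alternative, so the theorem applies verbatim. (One should also note the harmless typo in the statement of Lemma \ref{L2}, where the second Bochner space is written over ``$S$'' rather than ``$G$''; both base spaces are of course $G$.) A secondary point is purely bookkeeping: Theorem \ref{T1} is stated for complex Banach spaces $X_1,X_2$, and the weighted Schatten classes $\Sc_{p,a}$ are complex Banach spaces, so no extra argument is needed. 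Thus the proof is a two-line application of Theorems \ref{T1} and Lemma \ref{L1}, and the ``hard part'', such as it is, is merely verifying that the hypotheses of the quoted interpolation theorem are met, which is immediate.
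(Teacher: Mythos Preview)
Your proposal is correct and matches the paper's approach exactly: the paper states the lemma as an immediate consequence of Theorem \ref{T1} (with $p_1=p_2=p$) combined with Lemma \ref{L1}, which is precisely what you do. Your added remarks about compatibility of the pair and the typo ``$S$'' for ``$G$'' are accurate and welcome, but not needed for the argument.
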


We formulate a version of Clarkson-McCarthy inequality for weighted Schatten classes.

\begin{theorem}\label{TheoremPQweighted}
Let $G$ be a locally compact group with Haar measure $\mu$, let $\hat G$ be its Pontryagin dual with the dual measure $\nu$. For all $1\le p\le 2$, $A_\theta\in L^1(G;\Sc_{p,a})\cap L^2(G;\Sc_{p,a})$, invertible $a$, $b\in\Gl(H)^{+}$, and $t \in[0,1]$, the operator
$$B_\xi=\int_GA_\theta\overline{\xi(\theta)}\d\mu(\theta)$$
belongs to $L^q(\hat{G};\Sc_{p,\gamma_{a,b}(t)})$ and
\begin{equation}\label{TPQweightedagamma}
\int_{\hat G}\left\|\int_GA_\theta\ovl{\xi(\theta)}\d\mu(\theta)\right\|_{p,\gamma_{a,b}(t)}^q\d\nu(\xi)
    \le\|a^{1/2}b^{-1}a^{1/2}\|^t\left(\int_G\|A_\theta\|_{p,a}^p\d\mu(\theta)\right)^{q/p},
\end{equation}
as well as for the same $p$, $a$, $b$, $t$ and $A_\theta\in L^1(G;\Sc_{p,\gamma_{a,b}(t)})\cap L^2(G;\Sc_{p,\gamma_{a,b}(t)})$ we have $B_\xi\in L^q(\hat G;\Sc_{p,a})$ and
\begin{equation}\label{TPQweightedgammaa}\int_{\hat G}\left\|\int_GA_\theta\ovl{\xi(\theta)}\d\mu(\theta)\right\|_{p,a}^q\d\nu(\xi)
\le\|b^{1/2}a^{-1}b^{1/2}\|^t\left(\int_G\|A_\theta\|_{p,\gamma_{a,b}(t)}^p\d\mu(\theta)\right)^{q/p},
\end{equation}
where $q$ is conjugate to $p$, i.e.\ $q=p/(p-1)$, $p\neq1$ and $q=+\infty$ if $p=1$.
\end{theorem}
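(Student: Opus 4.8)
The plan is to derive Theorem \ref{TheoremPQweighted} from Theorem \ref{TheoremPQ} by the same interpolation machinery, but now interpolating only in the ``weight'' variable $a$ rather than in the pair $(p,q)$. Recall that Theorem \ref{TheoremPQ} already furnishes, for each fixed invertible $c\in\Gl(H)^+$, a bounded map $T$ from $L^p(G;\Sc_p)$ to $L^q(\hat G;\Sc_p)$ with norm at most $1$; but the operator $X\mapsto c^{-1/2}Xc^{-1/2}$ is an isometry of $\Sc_{p,c}$ onto $\Sc_p$ that commutes with the integral $A_\theta\mapsto B_\xi=\int_G A_\theta\overline{\xi(\theta)}\,\d\mu(\theta)$ (because multiplication by the scalar $\overline{\xi(\theta)}$ and left/right multiplication by a fixed operator commute, and by (\ref{BochnerLinear})). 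Hence $T\colon L^p(G;\Sc_{p,c})\to L^q(\hat G;\Sc_{p,c})$ also has norm at most $1$, for \emph{every} $c$. In particular, taking $c=a$ and $c=b$ gives two bounds both equal to $1$.

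Next I would set up the interpolation couples. On the source side, by Lemma \ref{L2} we have $[L^p(G;\Sc_{p,a}),L^p(G;\Sc_{p,b})]_t=L^p(G;\Sc_{p,\gamma_{a,b}(t)})$ isometrically. On the target side, the same Lemma (with $\hat G$ in place of $G$) gives $[L^q(\hat G;\Sc_{p,a}),L^q(\hat G;\Sc_{p,b})]_t=L^q(\hat G;\Sc_{p,\gamma_{a,b}(t)})$. Applying Theorem \ref{ExactInterpolation} to $T$ with these two couples yields
\begin{equation*}
\|T\|_{B(L^p(G;\Sc_{p,\gamma_{a,b}(t)}),\,L^q(\hat G;\Sc_{p,\gamma_{a,b}(t)}))}\le 1^{1-t}\cdot 1^{t}=1,
\end{equation*}
which is almost (\ref{TPQweightedagamma}) --- except that the left-hand norm on $A_\theta$ is $\|\cdot\|_{p,\gamma_{a,b}(t)}$, not $\|\cdot\|_{p,a}$. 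So the genuine content of the theorem is the passage from the $\gamma_{a,b}(t)$-norm to the $a$-norm on the domain, at the cost of the explicit constant $\|a^{1/2}b^{-1}a^{1/2}\|^t$.

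The key step, then, is a comparison of the two weighted norms: I claim $\|X\|_{p,\gamma_{a,b}(t)}\le \|a^{1/2}b^{-1}a^{1/2}\|^{t}\,\|X\|_{p,a}$ for all $X\in\Sc_p$. Writing $g=\gamma_{a,b}(t)=a^{1/2}(a^{-1/2}ba^{-1/2})^{t}a^{1/2}$, we have $\|X\|_{p,g}=\|g^{-1/2}Xg^{-1/2}\|_p$ and $\|X\|_{p,a}=\|a^{-1/2}Xa^{-1/2}\|_p$; substituting $Y=a^{-1/2}Xa^{-1/2}$ reduces the claim to $\|g^{-1/2}a^{1/2}Ya^{1/2}g^{-1/2}\|_p\le\|a^{1/2}b^{-1}a^{1/2}\|^{t}\|Y\|_p$. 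Now $g^{-1/2}a^{1/2}Ya^{1/2}g^{-1/2}=WYW^*$ where $W=g^{-1/2}a^{1/2}$, and by the standard submultiplicativity $\|WYW^*\|_p\le\|W\|^2\|Y\|_p$ (operator norm times Schatten norm). Finally $\|W\|^2=\|W^*W\|=\|a^{1/2}g^{-1}a^{1/2}\|$, and a direct computation gives $a^{1/2}g^{-1}a^{1/2}=(a^{-1/2}ba^{-1/2})^{-t}=(a^{1/2}b^{-1}a^{1/2})^{t}$ after conjugating; since this operator is positive, its norm equals $\|a^{1/2}b^{-1}a^{1/2}\|^{t}$ by the spectral mapping theorem. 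This proves (\ref{TPQweightedagamma}) after raising the resulting norm inequality to the power $q$ and integrating; the membership $B_\xi\in L^q(\hat G;\Sc_{p,\gamma_{a,b}(t)})$ follows from the finiteness of the right-hand side together with boundedness of $T$. Inequality (\ref{TPQweightedgammaa}) is obtained by the symmetric argument: interchange the roles of $a$ and $b$ in the weight comparison (equivalently, use $\gamma_{b,a}(1-t)=\gamma_{a,b}(t)$ — a short identity worth checking — so that what was the ``interpolated'' weight becomes an endpoint and vice versa), yielding the constant $\|b^{1/2}a^{-1}b^{1/2}\|^{t}$.

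The main obstacle is purely computational: verifying the identity $a^{1/2}\gamma_{a,b}(t)^{-1}a^{1/2}=(a^{1/2}b^{-1}a^{1/2})^{t}$ (and its companion for (\ref{TPQweightedgammaa})), because $a$ and $b$ need not commute, so one must manipulate the expression $\bigl(a^{-1/2}ba^{-1/2}\bigr)^{t}$ carefully using the functional calculus and the fact that conjugation by an invertible operator intertwines $f(cdc^{-1})=cf(d)c^{-1}$ for continuous $f$. Everything else --- the commutation of $T$ with the weight isometries, the application of Theorems \ref{ExactInterpolation} and Lemma \ref{L2}, and the Schatten-norm submultiplicativity --- is routine. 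It is worth noting that the case $p=1$ (hence $q=\infty$) again needs the ``move the power to the left, then let $q\to\infty$'' reading flagged in the remark after Theorem \ref{TheoremPQ}, but the argument is otherwise unchanged since the $p=1$ endpoint of Theorem \ref{TheoremPQ} was established directly there.
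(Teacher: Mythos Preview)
Your argument is correct, but it takes a different route from the paper's. The paper keeps the \emph{domain} weight fixed at $a$ and establishes two endpoint bounds for the codomain: $\|T\|_{L^p(G;\Sc_{p,a})\to L^q(\hat G;\Sc_{p,a})}\le 1$ and, via the elementary comparison $\|X\|_{p,b}\le\|a^{1/2}b^{-1}a^{1/2}\|\,\|X\|_{p,a}$, also $\|T\|_{L^p(G;\Sc_{p,a})\to L^q(\hat G;\Sc_{p,b})}\le\|a^{1/2}b^{-1}a^{1/2}\|$; then it interpolates \emph{only in the codomain} using Lemma~\ref{L2} to reach $\Sc_{p,\gamma_{a,b}(t)}$ with constant $\|a^{1/2}b^{-1}a^{1/2}\|^t$. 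You instead first obtain $\|T\|_{L^p(G;\Sc_{p,\gamma})\to L^q(\hat G;\Sc_{p,\gamma})}\le 1$ and then apply the sharper pointwise comparison $\|X\|_{p,\gamma_{a,b}(t)}\le\|a^{1/2}b^{-1}a^{1/2}\|^{t}\|X\|_{p,a}$ on the domain side, which you verify via $a^{1/2}\gamma_{a,b}(t)^{-1}a^{1/2}=(a^{-1/2}ba^{-1/2})^{-t}=(a^{1/2}b^{-1}a^{1/2})^{t}$. Two remarks: first, your interpolation step is actually superfluous, since your opening observation that $T$ is a contraction $L^p(G;\Sc_{p,c})\to L^q(\hat G;\Sc_{p,c})$ for \emph{every} $c$ already covers $c=\gamma_{a,b}(t)$; so your proof in fact avoids Lemma~\ref{L2} altogether and is more elementary than the paper's. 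Second, the ``main obstacle'' you flag is milder than you suggest: no conjugation-intertwining identity is needed, because $g^{-1}=a^{-1/2}(a^{-1/2}ba^{-1/2})^{-t}a^{-1/2}$ follows immediately from the definition of $g$, and $(P^{-1})^{t}=P^{-t}$ for positive $P$ handles the rest. The trade-off is that the paper's route needs only the trivial endpoint comparison $\|X\|_{p,b}\le\|a^{1/2}b^{-1}a^{1/2}\|\,\|X\|_{p,a}$ and lets interpolation produce the exponent $t$, whereas your route computes that exponent directly from the geodesic formula.
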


\begin{proof}
Put $a^{-1/2}A_\theta a^{-1/2}$ instead of $A_\theta$ in (\ref{Inequalityp<2}). Applying (\ref{BochnerLinear}) to
$$A_\theta\mapsto a^{-1/2}A_\theta a^{-1/2},$$
we obtain
\begin{equation}\label{InequalityPQweighteda}
\int_{\hat G}\left\|a^{-1/2}\int_GA_\theta\ovl{\xi(\theta)}\d\mu(\theta)a^{-1/2}\right\|_p^q\d\nu(\xi)
    \le\left(\int_G\|a^{-1/2}A_\theta a^{-1/2}\|_p^p\d\mu(\theta)\right)^{q/p},
\end{equation}
i.e.
$$\int_{\hat G}\left\|\int_GA_\theta\ovl{\xi(\theta)}\d\mu(\theta)\right\|_{p,a}^q\d\nu(\xi)
    \le\left(\int_G\|A_\theta\|_{p,a}^p\d\mu(\theta)\right)^{q/p},$$
This means that the mapping $A_\theta\mapsto B_\xi=\int_GA_\theta\overline{\xi(\theta)}\d\mu(\theta)$ is a bounded linear operator, with norm at most one, regarded as an operator from $L^p(G;\Sc_{p,a})$ to $L^q(\hat G;\Sc_{p,a})$. Denote this operator by $T$. We have
\begin{equation}\label{Tweightedaa}
\|T\|_{L^p(G;\Sc_{p,a})\mapsto L^q(\hat G;\Sc_{p,a})}\le1.
\end{equation}

Now, put $b$ instead of $a$ in (\ref{InequalityPQweighteda}). We have
\begin{align*}\int_{\hat G}&\left\|b^{-1/2}\int_GA_\theta\ovl{\xi(\theta)}\d\mu(\theta)b^{-1/2}\right\|_p^q\d\nu(\xi)
    \le\left(\int_G\|b^{-1/2}A_\theta b^{-1/2}\|_p^p\d\mu(\theta)\right)^{q/p}\\
    &\le\left(\|b^{-1/2}a^{1/2}\|^p\int_G\|a^{-1/2}A_\theta a^{-1/2}\|_p^p\d\mu(\theta)\|b^{-1/2}a^{1/2}\|^p\right)^{q/p}\\
    &=\|a^{1/2}b^{-1}a^{1/2}\|^q\left(\int_G\|A_\theta\|_{p,a}^p\d\mu(\theta)\right)^{q/p}.
\end{align*}

The latter means that the operator $T$ is a bounded linear operator, regarded as an operator from $L^p(G;\Sc_{p,a})$ to $L^q(\hat G;\Sc_{p,b})$ with norm at most $\|a^{1/2}b^{-1}a^{1/2}\|$, i.e.
\begin{equation}\label{Tweightedab}
\|T\|_{L^p(G;\Sc_{p,a})\mapsto L^q(\hat G;\Sc_{p,b})}\le\|a^{1/2}b^{-1}a^{1/2}\|.
\end{equation}

From (\ref{Tweightedaa}) and (\ref{Tweightedab}), using Theorem \ref{ExactInterpolation} and Lemma \ref{L2} we obtain
$$\|T\|_{L^p(G;\Sc_{p,a})\mapsto L^q(\hat G;\Sc_{p,\gamma_{a,b}(t)})}\le\|a^{1/2}b^{-1}a^{1/2}\|^t,$$
i.e.\ (\ref{TPQweightedagamma}).

The inequality (\ref{TPQweightedgammaa}) can be obtained by switching the roles of $a$ and $b$ in (\ref{Tweightedab}) and Theorem \ref{ExactInterpolation} and Lemma \ref{L2}.
\end{proof}

\bibliographystyle{amsplain}
\bibliography{Clarkson}
\end{document}